\newtheorem{theorem}{Theorem}[section]
\newtheorem{lemma}[theorem]{Lemma}
\newtheorem{remark}[theorem]{Remark}
\numberwithin{equation}{section}
\title[Inverse moving source problem]{Inverse moving source problems for parabolic equations}
 \author[Y. Zhao]{Yue Zhao}
\address{School of Mathematics and Statistics, Central China Normal University,
Wuhan 430079, China}
\email{zhaoyueccnu@163.com}
\subjclass[2010]{35R30, 80A23}
\keywords{inverse moving source problems, parabolic equations, uniqueness}
\begin{document}

\begin{abstract}
This paper is concerned with the inverse moving source
problems for parabolic equations. 
Given the temporal function, we prove the uniqueness of the nonlinear inverse problem of determining the orbit function by final data measured in a bounded domain.
On the other hand, given the orbit function we also show that the profile function can be uniquely determined by final data measured in a bounded domain away from 
the domain enclosing the moving orbit.
The proofs adopt the Fourier approach and results from complex analysis. 
\end{abstract}

\maketitle

\section{Introduction}

We consider the following parabolic system
\begin{align}\label{main_eq}
\begin{cases}
\partial_t u(x, t) - \Delta u(x, t) =F(x, t), \quad & (x, t) \in \mathbb R^3\times (0, \infty),\\
u(x, 0) = 0,  \quad & x\in\mathbb R^3.
\end{cases}
\end{align}
The parabolic system \eqref{main_eq} describes the diffusion process caused by the internal source $F(x, t)$ which arises in many scientific and industrial areas.
For instance, a classical diffusion process modeled by \eqref{main_eq} is heat conduction, where $u(x, t)$ is the temperature distribution in the medium
and $F(x, t)$ is the heat source which causes the heat propagation in the medium. The parabolic system is also used to describe many time-dependent phenomena, including particle diffusion and pricing of derivative investment instruments \cite{Triki}.

The inverse source problems for the parabolic system \eqref{main_eq} are to determine the unknown internal source $F(x, t)$ from either boundary or final time measurements of $u(x, t)$. The inverse source problems find a variety of applications in science and engineering since the internal source function is hard to measure in many situations.
For example, the inverse source problems are related to the identification problem of sources of pollution on water surfaces like rivers and lakes \cite{Ok}. For this problem,
one considers a diffusion-convection-reaction equation which can easily be reduced to the parabolic equation by a standard transformation. In such a situation, it is usually dangerous to measure the source directly or make observations close to it since the source could be toxic or radiative.
The inverse source problems also have applications in multi-wave imaging, geophysics \cite{Ammari, DY} and identification of the rate of the outflow in an oil reservoir containing a number of walls \cite{BD}. Recently, the inverse source problem approach to the iron loss determination has been studied in \cite{HHP, KSDV}.

The inverse source problems for parabolic equations have been extensively studied both in theory and practice, see e.g. the monograph \cite{Isakov} and \cite{BD, Cannon, CL-AML, CY, HHP, ZLL}. A typical inverse source problem considered in these literatures is to assume that the source function has the form $F(x, t) = f(x)g(t)$ and determine the spatial function $f(x)$ given the temporal function $g(t)$ \cite{CL-AML, CY}. For such sources, the inverse problem is $linear$.
On the other hand, a common situation in practice is that the source moves with respect to time. In this case, 
an important inverse problem is to determine the unknown moving orbit for the source function. Such kind of inverse source problems is highly $nonlinear$ and is rarely studied. 
So far the uniqueness result is not available for the inverse parabolic moving source problem,
which is the focus of this work.  
We mention a recent work on uniqueness of the inverse moving source problem in electrodynamics \cite{HKLZ}. 
The proof in \cite{HKLZ} depends on the strong Huygens principle for wave propagation in three dimensions and the Fourier transform for the time variable,
which converts the time domain problem into a frequency domain one. However, as the heat equations differs dramatically from the wave equations, new methods 
shall be developed.

Now we formulate the inverse moving source problem. In this setting,
the source function $F(x, t)$ is assumed to be given in the following form:
\[
F(x, t) = f(x - a(t))g(t),
\]
where $f:\mathbb R^3\to\mathbb R$ is the source profile function, $g:\mathbb R^+\to \mathbb R$ the temporal function, and
$a: \mathbb R^+\to\mathbb R^3$ is the orbit function of the moving source. 
Denote the ball centered at the origin with radius $R$ by $B_R$ and its boundary by $\partial B_R$. Assume that the profile function $f(x)$ is compactly supported in $B_{R_1}$ and $|a(t)|< R_2$. Thus, the source moves in a bounded domain, i.e. $\text{supp}_x F(x, t)\subset B_R$ with $R > R_1 + R_2$.
Assume that $f\in L^2(B_R)$, $a\in C^1[0, \infty)$ and $g\in C[0, \infty)$ which is also bounded.  There exists a unique solution $u\in L^2(0, T; H^2(\mathbb R^3))$ to \eqref{main_eq}
for any $T>0$ \cite{Isakov}.

This paper is concerned with the inverse moving source problems of determining the profile function $f(x)$ and the orbit function $a(t)$ from the final data measured in a bounded domain. Specifically, we study the following two inverse problems:

\vskip 5pt

\noindent(i) \textbf{IP1}. Given $a(t)$, determine the unknown profile function $f(x)$ from the final data $u(x, T)$ for $T>0$ and $x\in\Omega$. Here $\Omega$ is a bounded domain which satisfies $\Omega\cap (\mathbb R^3\setminus B_R) \neq \phi$.

\vskip 5pt

\noindent(i) \textbf{IP2}. Given $f(x)$, determine the unknown moving orbit $a(t)$ from the final data $u(x, T)$ for $T>0$ and $x\in B_R$.

\vskip 5pt

\noindent The inverse problem \textbf{IP1} is linear, whereas the \textbf{IP2} is a nonlinear inverse source problem.

In this paper, we use the final data to recover either the source profile function or the orbit function. Given the orbit function, we can recover the profile function
using the final data measured in a bounded domain $\Omega$, which satisfies $\Omega\cap (\mathbb R^3\setminus B_R) \neq \phi$. This choice of the domain 
$\Omega$ enables us to apply the unique continuation.  In particular, we point it out that the domain can be chosen such that its closure does not intersect the orbit of
the moving source, i.e., $\bar{\Omega}\cap B_R = \phi$, which could be of practical interest \cite{EI}.  For instance, if the profile source $f$ is radiative, then the observation
data is preferable to be measured in $\mathbb R^3 \setminus B_R$ as it is dangerous to make observations in $B_R$.
The proof uses the Fourier approach and results from complex analysis
such as the Payley--Wiener and Littlewood theorems \cite{ECT}.
On the other hand, given the profile function, we can recover the moving orbit. In this case, we require the measurement domain to be $B_R$, which includes the orbit of the moving source. Motivated by \cite{HKLZ}, we apply the moment theory to deduce the uniqueness under a priori assumptions on the path of the moving source.
The proofs in this paper carry over to two dimensions.

The rest of the paper is organized as follows. Section \ref{IP1} is devoted to the uniqueness of \textbf{IP1}. In Section \ref{IP2} we prove the  uniqueness of \textbf{IP2}.

\section{Determination of the profile function}\label{IP1}

In this section we consider \textbf{IP1}. We adopt the Fourier approach to prove the uniqueness.

The fundamental solution of the parabolic equation is 
\[
G(x, t) = \frac{1}{(4\pi t)^{3/2}} e^{-\frac{|x|^2}{4t}},
\]
which satisfies
\begin{align*}
\begin{cases}
\partial_t G - \Delta G = 0, \quad & (x, t) \in \mathbb R^3\times (0, \infty),\\
G(x, 0) = \delta(x),  \quad & x\in\mathbb R^3.
\end{cases}
\end{align*}
Then from the Duhamel's principle the solution to the parabolic equation \eqref{main_eq} can be represented by
\begin{align}\label{u}
u(x, t) = G(x, t)\ast (f(x-a(t))g(t)) = \int_0^t\int_{B_R} G(x - y, t- s) f(y-a(s))g(s) {\rm d}y {\rm d}s.
\end{align}

Define the Fourier transform with respect to the spatial variable $x$ as follows
\[
\hat{v}(\xi) = \int_{\mathbb R^3} v(x) e^{{\rm i}x\cdot\xi} {\rm d}x.
\]
Since $\hat{G}(\xi, t) = e^{-t|\xi|^2}$, noting \eqref{u}, taking the Fourier transform of $u$ with respect to the spatial variable $x$ and using the 
Fourier transform of convolution $\widehat{w\ast v} = \hat{w}\hat{v}$ one has that
\begin{align}\label{fu}
\hat{u} (\xi, t) &= \int_0^t \hat{G}(\xi, t-s) \widehat{f(x-a(s))}(\xi) g(s){\rm d}s\notag\\
&=\hat{f}(\xi) \int_0^t e^{-{\rm i} (t - s) |\xi|^2} e^{{\rm i}a(s)\cdot\xi} g(s) {\rm d}s.
\end{align}

Denote 
\begin{align}\label{F}
F(\xi) = \int_0^T e^{-{\rm i} (T - s) |\xi|^2} e^{{\rm i}a(s)\cdot\xi} g(s) {\rm d}t.
\end{align}
The following lemma is useful in the subsequent analysis.
\begin{lemma}\label{est}
Let $\zeta = {\rm i} r \eta$ with $\eta\neq 0$ and $r>1$. There exists $C>0$ such that
\[
|F(\zeta)|\geq C e^{C |\zeta|^2}.
\]
\end{lemma}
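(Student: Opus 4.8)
The plan is to substitute $\zeta={\rm i}r\eta$ (with $\eta\in\mathbb R^3\setminus\{0\}$ and $r>1$) directly into $F$ and to exploit that this turns the complex, oscillatory integrand into a real one governed by its value near the left endpoint $s=0$. Recalling $\widehat G(\xi,t)=e^{-t|\xi|^2}$, the entire continuation of $F$ is obtained by replacing $|\xi|^2$ with $\zeta\cdot\zeta$ and $a(s)\cdot\xi$ with $a(s)\cdot\zeta$. Since $\zeta\cdot\zeta=({\rm i}r)^2|\eta|^2=-r^2|\eta|^2$ and $a(s)\cdot\zeta={\rm i}r\,a(s)\cdot\eta$, one finds
\[
F(\zeta)=\int_0^T e^{(T-s)r^2|\eta|^2}\,e^{-r\,a(s)\cdot\eta}\,g(s)\,{\rm d}s .
\]
The key structural observation is that the Gaussian factor $e^{(T-s)r^2|\eta|^2}$ is strictly decreasing in $s$ and attains its maximum $e^{Tr^2|\eta|^2}$ at $s=0$, so the integral should be controlled by a shrinking neighbourhood of $s=0$, where its magnitude is of size $e^{Tr^2|\eta|^2}$.

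Next I would make this precise by a Laplace/Watson-type localisation. Factoring out the peak value,
\[
F(\zeta)=e^{Tr^2|\eta|^2}\int_0^T e^{-sr^2|\eta|^2}\,e^{-r\,a(s)\cdot\eta}\,g(s)\,{\rm d}s,
\]
the remaining integral concentrates at $s=0$ as $r\to\infty$ and to leading order behaves like $g(0)\,e^{-r\,a(0)\cdot\eta}/(r^2|\eta|^2)$. To turn this heuristic into a rigorous lower bound I would invoke the natural nondegeneracy $g(0)\neq0$, fix $\delta\in(0,T]$ so small that $g$ keeps a constant sign on $[0,\delta]$, and restrict the integration to $s\in[0,r^{-2}]$. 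On this interval all three factors are controlled from below: $e^{(T-s)r^2|\eta|^2}\ge e^{-|\eta|^2}e^{Tr^2|\eta|^2}$, then $e^{-r\,a(s)\cdot\eta}\ge e^{-rR_2|\eta|}$ (using $|a|<R_2$), and $|g|\ge|g(0)|/2$; since the integrand now has fixed sign, $|F(\zeta)|$ is bounded below by a constant multiple of $r^{-2}e^{-rR_2|\eta|}e^{Tr^2|\eta|^2}$. The tail $\int_\delta^T$ is bounded above in modulus by $Ce^{(T-\delta)r^2|\eta|^2}e^{rR_2|\eta|}$, which is smaller than the main part by a factor of order $e^{-\delta r^2|\eta|^2}$ and is therefore negligible for large $r$.

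Finally I would collect the estimates. Since $r^{-2}e^{-rR_2|\eta|}e^{Tr^2|\eta|^2}\ge e^{C'r^2|\eta|^2}$ for every $C'<T|\eta|^2$ once $r$ is large (the quadratic growth $e^{Tr^2|\eta|^2}$ swamps both the polynomial $r^{-2}$ and the linear-exponential $e^{-rR_2|\eta|}$), the bound $|F(\zeta)|\ge Ce^{C|\zeta|^2}$ follows for a suitable $C\in(0,T|\eta|^2)$, after which the constant is shrunk to cover the compact remaining range $1<r\le r_0$. The main obstacle is exactly this lower bound: unlike an upper estimate it is sensitive to cancellation in the $s$-integral, so the argument hinges on $g$ having a definite sign near $s=0$ (guaranteed by continuity and $g(0)\neq0$) and on confining the essential mass to $[0,r^{-2}]$, where the positive Gaussian kernel rules out cancellation; the tail must then be shown to be exponentially subdominant.
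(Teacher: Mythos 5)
Your substitution step is exactly the paper's: with $\zeta={\rm i}r\eta$ the analytic continuation of $F$ (reading $|\xi|^2$ as $\xi\cdot\xi$, which also silently repairs the spurious ${\rm i}$ in the paper's display \eqref{F}) becomes the real integral $\int_0^T e^{(T-s)r^2|\eta|^2}e^{-r\,a(s)\cdot\eta}g(s)\,{\rm d}s$, and the lower bound must come from localizing where the Gaussian factor is large. Where you differ is in the localization and in what you assume about $g$. The paper tacitly takes the whole integrand to be positive (i.e.\ $g>0$, as assumed in Theorem \ref{ip2}) and simply restricts to the middle third $[T/3,2T/3]$: there $e^{(T-s)r^2|\eta|^2}\ge e^{(T/3)r^2|\eta|^2}$ and $e^{-ra(s)\cdot\eta}\ge e^{-rR_2|\eta|}$, and since $(T/6)x^2-R_2x$ is bounded below in $x=r|\eta|$ the two exponentials combine into $Ce^{C|\zeta|^2}$ uniformly for all $r>1$ in one line. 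Your Laplace-type localization at $s=0$ over $[0,r^{-2}]$ is sharper (it captures $e^{Tr^2|\eta|^2}$ rather than $e^{(T/3)r^2|\eta|^2}$), but the extra precision buys nothing for the stated inequality, and it forces you to carry a tail estimate and an extra hypothesis $g(0)\neq0$ that is not among the paper's standing assumptions.

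The one genuine soft spot is your final step for $1<r\le r_0$. Under your hypotheses ($g$ nonzero and of constant sign only near $s=0$), the integral $F({\rm i}r\eta)$ can vanish at some finite $r$ --- your tail is only \emph{asymptotically} subdominant --- and then no positive constant covers the remaining range: ``shrinking the constant'' presupposes $\min_{r\in[1,r_0]}|F({\rm i}r\eta)|>0$, which you have not established. Under the global positivity $g>0$ that the paper implicitly uses this issue disappears, but then so does the need for your tail estimate and endpoint asymptotics: the integrand is positive everywhere and the crude middle-third restriction already gives the bound for every $r>1$. Either add $g>0$ on $[0,T]$ as a hypothesis and simplify accordingly, or supply a nonvanishing argument for $F({\rm i}r\eta)$ on compact $r$-ranges.
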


\begin{proof}
For $\zeta = {\rm i} r \eta$, noting that the integrand in \eqref{F} is positive and $a$ is bounded, one has
\begin{align*}
|F(\zeta)| = \int_0^T e^{(T-s)r^2|\eta|^2} e^{-ra(s)\cdot\eta} {\rm d}t &\geq \int_{T/3}^{2T/3} e^{(T-s)r^2|\eta|^2} e^{-ra(s)\cdot\eta} {\rm d}t\\
&\geq C e^{C |\zeta|^2},
\end{align*}
which completes the proof.
\end{proof}

The following theorem concerns the uniqueness of \textbf{IP1}. The proof adapts the arguments in that of \cite[Theorem 2.1]{EI} to the current case.
\begin{theorem}
Given the orbit function $a$ and $T>0$, the source profile function can be uniquely determined by $u(x, T)$ for $x\in\Omega$.
Here $\Omega$ is a bounded domain such that $\Omega\cap (\mathbb R^3\setminus B_R)$ is not empty.
\end{theorem}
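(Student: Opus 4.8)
The plan is to convert the statement into the Fourier-side assertion that vanishing of the final data forces $\hat f\equiv 0$, and then to play the exponential type of $\hat f$ (coming from the compact support of $f$) against the order-two growth of $F$ furnished by Lemma~\ref{est}. By linearity I assume $u(x,T)=0$ for $x\in\Omega$ and aim to prove $f=0$. The first step is a unique continuation argument. For $|x|>R$ and $y\in B_R$ one has $|x-y|\ge|x|-R>0$, so in the representation \eqref{u} the Gaussian kernel $G(x-y,T-s)$ is analytic in $x$ and differentiation under the integral sign is justified, the singularity at $s=T$ being suppressed by the factor $e^{-|x-y|^2/4(T-s)}$. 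Hence $u(\cdot,T)$ is real-analytic on the connected open set $\mathbb R^3\setminus\overline{B_R}$. Since $\Omega\cap(\mathbb R^3\setminus B_R)$ is a nonempty open set on which $u(\cdot,T)=0$, analytic continuation yields $u(x,T)=0$ for all $|x|>R$; thus $u(\cdot,T)$ is compactly supported in $\overline{B_R}$.

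Passing to the Fourier side, the Paley--Wiener theorem turns the compact support of $u(\cdot,T)$ into an entire extension of $\hat u(\cdot,T)$ to $\mathbb C^3$ satisfying $|\hat u(\zeta,T)|\le C e^{R|\mathrm{Im}\,\zeta|}$, and likewise $\mathrm{supp}\,f\subset B_{R_1}$ makes $\hat f$ entire of exponential type $R_1$. Evaluating \eqref{fu} at $t=T$ gives $\hat u(\xi,T)=\hat f(\xi)F(\xi)$ for real $\xi$; since $F$ from \eqref{F} extends to an entire function of $\zeta$, this factorization propagates to $\hat u(\zeta,T)=\hat f(\zeta)F(\zeta)$ for every $\zeta\in\mathbb C^3$.

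I then exploit the growth dichotomy along imaginary directions. Fix a unit vector $\eta$ and put $\zeta=\mathrm i r\eta$: Lemma~\ref{est} gives $|F(\mathrm i r\eta)|\ge C e^{Cr^2}$, while the Paley--Wiener bound gives $|\hat u(\mathrm i r\eta,T)|\le C e^{Rr}$, so $|\hat f(\mathrm i r\eta)|\le C e^{Rr-Cr^2}\to 0$ faster than any exponential as $r\to\infty$, and the same holds along $-\mathrm i r\eta$ upon replacing $\eta$ by $-\eta$. The restriction $\phi(\lambda):=\hat f(\lambda\eta)$ is an entire function of exponential type in one variable, and the estimates above show that its Phragmén--Lindelöf indicator equals $-\infty$ at $\theta=\pm\pi/2$. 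Because a nontrivial entire function of exponential type obeys $h_\phi(\theta)+h_\phi(\theta+\pi)\ge 0$ (trigonometric convexity of the indicator; this is where the Littlewood theorem of \cite{ECT} is invoked), we must have $\phi\equiv 0$. As $\eta$ is arbitrary, $\hat f\equiv 0$ and therefore $f=0$.

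The main obstacle is precisely this final step: reconciling the order-one growth of $\hat u(\cdot,T)$ with the order-two growth of $F$ so as to force $\hat f$ to vanish. The decay of $\hat f$ along the imaginary axis is invisible from its size on the real axis, so real-variable estimates do not suffice; one genuinely needs the complex-analytic machinery, namely Paley--Wiener to manufacture the entire extensions with the correct exponential type, and the Littlewood/Phragmén--Lindelöf indicator estimate to upgrade super-exponential decay along one axis into global vanishing. Secondary technical points are the justification of the analytic continuation of the factorization and of the entire extension of $F$, both of which are routine but must be verified.
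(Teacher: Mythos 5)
Your proof is correct and follows the paper's overall strategy: vanishing of the data plus unique continuation gives $u(\cdot,T)$ compact support, Paley--Wiener makes $\hat u(\cdot,T)$ and $\hat f$ entire of exponential type, and the order-two lower bound of Lemma~\ref{est} along imaginary directions then forces $\hat f\equiv 0$. You deviate in two places, both legitimately. First, instead of citing Mizohata/Saut--Scheurer you obtain $u(\cdot,T)=0$ on $\mathbb R^3\setminus B_R$ directly from the representation \eqref{u}, using that the heat potential is real-analytic in $x$ off $\overline{B_R}$; this is more self-contained, and arguably better matched to what is actually needed (spatial analyticity at the fixed time $T$, rather than space-time unique continuation). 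Second, at the final step the paper applies the Littlewood minimum-modulus theorem to $w(z)=\hat f(z\xi_0)$ to get $\min_{|z|=l_j}|w(z)|>e^{-Cl_j}$ on a sequence of circles, and derives an upper bound on $F$ contradicting Lemma~\ref{est}; you instead divide $\hat u=\hat f F$ by $F$ to get super-exponential decay of $\hat f(\mathrm i r\eta)$ and conclude via the indicator inequality $h_\phi(\theta)+h_\phi(\theta+\pi)\ge 0$ for a nontrivial function of exponential type. Both routes work and rest on the same two pillars (Paley--Wiener and Lemma~\ref{est}); the paper's version localizes the contradiction on circles, yours along a single ray. One labelling correction: the inequality you invoke is the trigonometric convexity of the Phragm\'en--Lindel\"of indicator (Levin/Boas), not Littlewood's theorem --- Littlewood's theorem is the minimum-modulus statement the paper actually uses, and you do not need it in your version. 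Finally, note that your argument, like the paper's, relies on the lower bound of Lemma~\ref{est}, whose proof tacitly requires $g$ to be of one sign; that hypothesis is stated for \textbf{IP2} but should be carried over to \textbf{IP1} as well.
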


\begin{proof}
Assume that $u_1$ and $u_2$ are solutions of \eqref{main_eq} corresponding to two profile functions $f_1$ and 
$f_2$, respectively. Let $u = u_1 - u_2$. If $u_1(x, T) = u_2(x, T)$ on $\Omega$, then from the unique continuation of Mizohata \cite{Mizo, SS} one has $u(x, T) = 0$ in
$\mathbb R^3\backslash B_R$. Thus, $u(x, T)$ has compact support. Taking the Fourier transform of $u$ and using \eqref{fu} one has
\begin{align}\label{fu1}
\hat{u}(\xi, T)= \hat{f}(\xi)
\int_0^T e^{-{\rm i} (T - s) |\xi|^2} e^{{\rm i}a(s)\cdot\xi} g(s) {\rm d}t.
\end{align}
Since both $f$ and $u$ have compact support, one has from the Payley--Wiener theorem that $\hat{f}$ and $\hat{u}$ are entire analytic function of order one. 
Thus, they are well-defined for $\xi = \zeta\in\mathbb C^3$, and there exists a constant $C$ such that
\begin{align}\label{est1}
|\hat{f}(\zeta)| \leq Ce^{C|\zeta|}, \quad |\hat{u}(\zeta, T)| \leq Ce^{C|\zeta|}.
\end{align}

Now we prove the uniqueness by contradiction. If $f\neq 0$, then there exists $\xi_0\in\mathbb R^3$ such that $\hat{f}(\xi_0)\neq 0$. The single complex variable entire function
$w(z) = \hat{f} (z\xi_0), z\in\mathbb C$ is of order one and not identically zero. Using the Littlewood theorem from complex analysis (see p. 227 of \cite{ECT}) one has that there exist $C$ and a sequence $l_j\to\infty$ such that $\min_{|z|=l_j}|w(z)|>e^{-Cl_j}$. Using \eqref{fu1} and \eqref{est1} one has that
\[
|F(\zeta)| \leq Ce^{2Cl_j}, \quad \text{for} \,\, |\zeta| = l_j,
\]
which is a contradiction to Lemma \ref{est}. Thus we have $f \equiv 0$ which completes the proof.

\end{proof}

\section{Determination of the orbit function}\label{IP2}

In this section we consider \textbf{IP2}.  The following lemma \cite[Lemma 4.1]{HKLZ} is useful in the proof of the uniqueness.

\begin{lemma}\label{moment}
Let $f_1, f_2, g\in C^1[0, L]$ be functions such that
\[
f^{\prime}_1>0, f^{\prime}_2>0, g>0 \mbox{ on } [0,L];\quad f_1(0)=f_2(0).
\]
In addition, suppose that
\begin{align}\label{eq:f}
\int_{0}^{L} f_1^n( s)g(s){\rm d} s = \int_{0}^{L} f_2^n( s)g(s){\rm d} s
\end{align}
for all integers $n = 0, 1, 2 \cdots$. Then it holds that $f_1 = f_2$ on $[0, L]$.
\end{lemma}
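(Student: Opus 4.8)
The plan is to prove this moment-matching result by reducing the system of equations \eqref{eq:f} to a statement about moments of a single measure, and then invoking the uniqueness of measures determined by their moments. First I would exploit the monotonicity hypotheses $f_1' > 0$ and $f_2' > 0$ to observe that each $f_i$ is a strictly increasing, hence invertible, map from $[0, L]$ onto its image $[f_i(0), f_i(L)]$. Since $f_1(0) = f_2(0)$, the two images share a common left endpoint. The natural move is to change variables in each integral: setting $t = f_i(s)$ in the $i$-th integral converts $\int_0^L f_i^n(s) g(s)\,\mathrm{d}s$ into $\int_{f_i(0)}^{f_i(L)} t^n \, \mathrm{d}\mu_i(t)$, where $\mu_i$ is the pushforward measure whose density with respect to Lebesgue measure is $g(f_i^{-1}(t)) / f_i'(f_i^{-1}(t))$, a positive $L^1$ function by the hypotheses $g > 0$ and $f_i' > 0$.

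With this reformulation, \eqref{eq:f} asserts that the two compactly supported positive measures $\mu_1$ and $\mu_2$ have identical moments of all orders $n = 0, 1, 2, \dots$. The key step is then to conclude $\mu_1 = \mu_2$. Because both measures are supported in a common bounded interval (the union of the two image intervals is contained in some $[f(0), M]$), the classical Hausdorff moment problem is determinate: a compactly supported measure is uniquely determined by its moment sequence. Equivalently, equality of all moments means $\int p \, \mathrm{d}\mu_1 = \int p \, \mathrm{d}\mu_2$ for every polynomial $p$, and by the Weierstrass approximation theorem polynomials are dense in $C[f(0), M]$, so the two measures act identically on all continuous functions and must coincide as measures. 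In particular their supports coincide, which forces $f_1(L) = f_2(L)$, so the two image intervals are identical.

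Once $\mu_1 = \mu_2$ as measures on a common interval $[f(0), M]$, their densities agree almost everywhere, giving the pointwise identity
\begin{equation}\label{eq:density}
\frac{g(f_1^{-1}(t))}{f_1'(f_1^{-1}(t))} = \frac{g(f_2^{-1}(t))}{f_2'(f_2^{-1}(t))}, \quad t \in [f(0), M].
\end{equation}
The final step is to upgrade \eqref{eq:density} to the desired conclusion $f_1 = f_2$. Here I would translate back to the variable $s$: the cumulative distribution functions of $\mu_1$ and $\mu_2$ must agree, and since $\mu_i\big([f(0), f_i(s_0)]\big) = \int_0^{s_0} g(s)\,\mathrm{d}s$ independently of $i$ (the change of variables restores the original integral with the same integrand $g$), equating the two for each level yields $\int_0^{f_1^{-1}(t)} g = \int_0^{f_2^{-1}(t)} g$. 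Because $g > 0$, the map $s \mapsto \int_0^s g$ is strictly increasing and hence injective, which forces $f_1^{-1}(t) = f_2^{-1}(t)$ for all $t$, i.e. $f_1 = f_2$ on $[0, L]$.

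I expect the main obstacle to be handling the endpoints carefully: the two image intervals are only known a priori to share their left endpoint $f_1(0) = f_2(0)$, and one must deduce equality of the right endpoints $f_1(L) = f_2(L)$ before the densities can be compared on a common domain. Matching the zeroth moment ($n = 0$) already gives $\int_0^L g = \mu_1(\mathbb{R}) = \mu_2(\mathbb{R})$, which is consistent but not by itself enough; the support argument from the determinacy of the moment problem is what actually pins down the right endpoints, so the cleanest route is to establish $\mu_1 = \mu_2$ as measures first and extract the endpoint and pointwise conclusions afterward.
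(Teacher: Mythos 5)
Your argument is correct, but note that the paper itself offers no proof of this lemma to compare against: it is quoted verbatim from \cite[Lemma 4.1]{HKLZ}. Your route --- pushing each integral forward under the substitution $t=f_i(s)$ to a positive measure with continuous density $g(f_i^{-1}(t))/f_i'(f_i^{-1}(t))$ on $[f_i(0),f_i(L)]$, invoking determinacy of the compactly supported (Hausdorff) moment problem via Weierstrass approximation and the Riesz representation to get $\mu_1=\mu_2$, reading off $f_1(L)=f_2(L)$ from equality of supports (the densities are strictly positive and continuous, so the support is the full closed image interval), and finally comparing distribution functions, $\mu_i\bigl([f(0),\tau]\bigr)=\int_0^{f_i^{-1}(\tau)}g(s)\,{\rm d}s$, so that strict positivity of $g$ forces $f_1^{-1}=f_2^{-1}$ --- is complete and uses every hypothesis where it is needed ($f_1(0)=f_2(0)$ for the common left endpoint, $f_i'>0$ for invertibility and a well-defined density, $g>0$ for both the support identification and the final injectivity step). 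This is essentially the standard proof of such moment lemmas and, to the best of my knowledge, the same strategy as in the cited source; your closing observation that the $n=0$ moment alone does not pin down the right endpoint, and that one must first establish $\mu_1=\mu_2$ as measures before comparing densities on a common domain, is exactly the right point of care.
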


The following theorem is the main result of this paper which proves the uniqueness of \textbf{IP2}.

\begin{theorem}\label{ip2}
Assume that $g\in C(0, T)$, $g(t)>0$ for $t\in (0, T)$ and that $a(0) = 0$ is located at the origin and that each component $a_j, j = 1, 2, 3$ of $a$ satisfies $a_j^\prime\neq 0$ for $t\in (0, T)$.
The orbit function $a$ can be uniquely determined by the final data $u(x, T)$ for $x\in  B_R$.
\end{theorem}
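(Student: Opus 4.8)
The plan is to reduce the recovery of the orbit $a(t)$ to an application of the moment Lemma \ref{moment}, using the Fourier-side representation \eqref{fu} of the data. The starting observation is that if two orbits $a^{(1)}$ and $a^{(2)}$ produce the same final data $u(x,T)$ on $B_R$, then by the same Mizohata unique continuation argument used in the proof of \textbf{IP1} (the difference $u=u_1-u_2$ vanishes on a domain meeting $\mathbb R^3\setminus B_R$, hence vanishes outside $B_R$ and has compact support), so that $\hat u(\xi,T)$ is well-defined and entire. Taking the Fourier transform and using \eqref{fu}, the known profile $\hat f(\xi)$ factors out, giving the identity

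\begin{align}\label{eq:plan-moment}
\int_0^T e^{-{\rm i}(T-s)|\xi|^2}\,e^{{\rm i}a^{(1)}(s)\cdot\xi}\,g(s)\,{\rm d}s
=\int_0^T e^{-{\rm i}(T-s)|\xi|^2}\,e^{{\rm i}a^{(2)}(s)\cdot\xi}\,g(s)\,{\rm d}s
\end{align}

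for all $\xi$ lying outside the (measure-zero) zero set of $\hat f$, and hence, by analyticity in $\xi=\zeta\in\mathbb C^3$, for all $\xi\in\mathbb R^3$.

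The key step is to extract from \eqref{eq:plan-moment} the moment conditions \eqref{eq:f} needed to invoke Lemma \ref{moment}. First I would cancel the common factor $e^{-{\rm i}(T-s)|\xi|^2}$ — this is the delicate point, since it depends on $s$ and cannot simply be pulled outside the integral. The natural remedy is to exploit the freedom in $\xi$: I expect one restricts $\xi$ to a direction where $|\xi|^2$ can be controlled, or differentiates/expands in a way that isolates the orbit contribution. Concretely, I would choose $\xi=t\,e_j$ along a coordinate axis $e_j$, so that $a^{(k)}(s)\cdot\xi = t\,a_j^{(k)}(s)$, reducing the problem to one component at a time, and then expand $e^{{\rm i}t a_j^{(k)}(s)}$ (after disposing of the $e^{-{\rm i}(T-s)t^2}$ factor) as a power series in $t$. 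Matching coefficients of powers of $t$ on both sides yields exactly the moment equalities $\int_0^T (a_j^{(1)}(s))^n g(s)\,{\rm d}s = \int_0^T (a_j^{(2)}(s))^n g(s)\,{\rm d}s$ for all $n$.

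Once the moment identities are in hand, Lemma \ref{moment} finishes each component: the hypotheses $a_j^{(k)\prime}\neq 0$ on $(0,T)$ give strict monotonicity (possibly after adjusting sign so that the derivative is positive, or by replacing $a_j$ with $-a_j$), the condition $a(0)=0$ supplies the matching initial value $a_j^{(1)}(0)=a_j^{(2)}(0)=0$, and $g>0$ is exactly the positivity hypothesis on the weight. Applying the lemma componentwise then yields $a_j^{(1)}=a_j^{(2)}$ on $(0,T)$ for each $j=1,2,3$, whence $a^{(1)}=a^{(2)}$ and uniqueness follows.

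The main obstacle I anticipate is the handling of the factor $e^{-{\rm i}(T-s)|\xi|^2}$, which couples the time variable $s$ to the Fourier frequency and obstructs a clean cancellation. Rigorously justifying the interchange of the series expansion with the integral, and arguing that matching all powers of $t$ is legitimate (e.g. via analyticity of both sides as entire functions of $t$ and uniqueness of Taylor coefficients), is where the technical care is required; the monotonicity and positivity bookkeeping needed to bring the hypotheses into the exact form demanded by Lemma \ref{moment} is comparatively routine.
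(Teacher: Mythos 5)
Your proposal follows essentially the same route as the paper's own proof: unique continuation to pass from data on $B_R$ to $\hat u_1(\cdot,T)=\hat u_2(\cdot,T)$, the factorization \eqref{fu} to cancel $\hat f$ along a coordinate axis, a Taylor expansion in the scalar frequency $\tau$ to extract moment identities, and Lemma \ref{moment} applied componentwise. The one step you flag as the main obstacle --- disposing of the factor $e^{-{\rm i}(T-s)|\xi|^2}$ --- is precisely where the paper is terse: it expands the full product $e^{-{\rm i}(T-s)\tau^2}e^{{\rm i}\tau a_1(s)}$ at $\tau=0$ and records the coefficient of $\tau^n/n!$ as $\int_0^T a_1^n(s)g(s)\,{\rm d}s$, which silently drops the cross terms $(T-s)^k a_1^m$ with $2k+m=n$, $k\geq 1$ (already at $n=3$ the term $\int_0^T(T-s)a_1(s)g(s)\,{\rm d}s$ appears and is not known to coincide with its $b_1$ counterpart); so your instinct that this cancellation is the delicate point is well founded, and neither your sketch nor the paper's argument as written fully resolves it.
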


\begin{proof}
Since $u_1(x, T) = u_2(x, T)$ for $x\in B_R$, from unique continuation in  \cite{Mizo, SS} one has that $u_1(x, T) = u_2(x, T)$ for all $x\in\mathbb R^3$, which gives 
$\hat{u_1}(\xi, T) = \hat{u_2}(\xi, T)$.
Using \eqref{fu} one has that
\begin{align}\label{iden}
\hat{f}(\xi)\int_0^T e^{-{\rm i} (T - s) |\xi|^2} e^{{\rm i}a(s)\cdot\xi} g(s) {\rm d}t = \hat{f}(\xi)
\int_0^T e^{-{\rm i} (T - s) |\xi|^2} e^{{\rm i}b(s)\cdot\xi} g(s) {\rm d}t.
\end{align}
Let $e_1 = (1, 0, 0)$. Since $f \not\equiv 0$, there exists a sequence $\{\tau_j\}_{j=1}^\infty$ such that $\lim_{j\to\infty}\tau_j = 0$ and for each $\tau_j$
one has $\hat{f}(\tau_j e_1)\neq 0$. Letting $\xi = \tau_j e_1$ in \eqref{iden} one has
\begin{align}\label{iden1}
\int_0^T e^{-{\rm i} (T - s) \tau_j^2} e^{{\rm i}\tau a_1(s)} g(s) {\rm d}t = 
\int_0^T e^{-{\rm i} (T - s) \tau_j^2} e^{{\rm i}\tau b_1(s)} g(s) {\rm d}t.
\end{align}
Expanding $e^{-{\rm i} (T - s) \tau^2} e^{{\rm i}\tau a_1(s)}$ and $e^{-{\rm i} (T - s) \tau^2} e^{{\rm i}\tau b_1(s)}$ into Taylor series at $\tau = 0$
we write \eqref{iden1} as
\begin{align}\label{iden2}
\sum_{n = 1}^\infty \frac{\alpha_n}{n!}\tau_j^n = \sum_{n = 1}^\infty \frac{\beta_n}{n!}\tau_j^n
\end{align}
where 
\[
\alpha_n = \int_0^T a^n_1(s) g(s){\rm d}s, \quad \alpha_n = \int_0^T b^n_1(s) g(s){\rm d}s, \quad n = 1, 2, \cdots.
\]
Since \eqref{iden2} holds for $\tau_j$ with $\lim_{j\to\infty}\tau_j= 0$, we have $\alpha_n = \beta_n$ which gives
\[
\int_0^T a^n_1(s) g(s){\rm d}s = \int_0^T b^n_1(s) g(s){\rm d}s.
\]
Then applying Lemma \ref{moment} we have $a_1(t) = b_1(t)$ for $t\in [0, T]$. Similarly we can prove $a_i(t) = b_i(t), i = 2, 3$ 
for $t\in [0, T]$ by choosing $\xi = \tau_j  e_2$ and $\xi = \tau_j  e_3$ where $e_2 =  (0,1 ,0)$ and $e_3 =  (0, 0, 1)$, respectively, and repeating the above arguments.

\end{proof}

\begin{remark}
In \cite[Theorem 4.2]{HKLZ} for wave equations, the speed of the orbit is required to satisfy $|a^\prime_j|<1, j = 1, 2, 3$. In other words, to guarantee the uniqueness of the orbit  the speed of the moving source can not be supersonic. On the other hand, for the heat equation, since the speed of the propagation of the heat can be considered as infinity, the smallness assumption on the speed of the orbit in \cite{HKLZ} can
be dropped in Theorem \ref{ip2}.  
\end{remark}

%

\section{Conclusion}

For the inverse moving source problems of the parabolic equation, we show that given the temporal function, the moving orbit can be uniquely determined from the final data measured in a bounded domain. The profile function can also be uniquely identified from the final data provided that the moving orbit is known. The proof utilizes the Fourier approach and results from complex analysis. A more challenging continuation of this work is to study the stability estimates of the inverse problems.  
Another interesting direction is to develop numerical methods to identify the orbit. It is also important to consider domains with boundaries where the diffusion occurs, e.g., 
a half-space or plane or bounded domains. In these cases, the Green's function is no longer the one in free space or may not even have an explicit representation,   
and the present method may not be directly applicable. We hope to report the progress on these problems elsewhere.


\begin{thebibliography}{99}

\bibitem{Ammari}
H. Ammari, J. Garnier, H. Kang, L. Nguyen and L. Seppecher, Multi-Wave Medical Imaging. Modeling and Simulation
in Medical Imaging, Volume 2, World Scientific, London, 2017.

\bibitem{BD}
A. Badia and T. Ha-Duong, On an inverse source problem for the heat equation. Application to a pollution detection problem, J. Inv. Ill-Posed Problems, 
10 (2002), 585--599.

\bibitem{Cannon}
J. R. Cannon, Determination of an unknown heat source from overspecified boundary data,
SIAM J. Numer. Anal., 5 (1968), 275--286,

\bibitem{CL-AML}
J. Cheng and J. Liu, An inverse source problem for parabolic equations with local
measurements, Appl. Math. Lett., 103 (2020). 106213.

\bibitem{CY}
M. Choulli and M. Yamamoto, Conditional stability in determining a heat source, J. Inv. Ill-Posed Problems, 
12 (2004), 233--243. 

\bibitem{DY}
L. Duckstein and S. Yakowitz, Instability in aquifer identification: Theory and case studies, Water Resources Research,16 (1980), 1045--1064.

\bibitem{ESY}
H. Engl, O. Scherzer and M. Yamamoto, Uniqueness and stable determination of forcing terms in
linear partial differential equations with overspecified boundary data, lnverse Problems, 10 (1994), 1253--1176.

\bibitem{EI}
A. Elayyan and V. Isakov, On an inverse diffusion problem, SIAM J. Appl. Math., 57 (1997), 1737--1748.

\bibitem{HHP}
A. Hannukainen, N. Hyv\"onen and L. Perkki\"o, Inverse heat source problem and experimental design for determining iron loss distribution, 
SIAM J. Sci. Comput., 43 (2021), B243--B270.



\bibitem{HKLZ}
G. Hu, Y. Kian, P. Li and Y. Zhao,  Inverse moving source problems in
electrodynamics. Inverse Problems, 35 (2019), 075001.

\bibitem{Isakov}
V. Isakov, Inverse Problems for Partial Differential Equations, second edition., Springer, 2006.

\bibitem{KSDV}
D. Kowal, P. Sergeant, L. Dupre and L. Vandenbossche, Comparison of iron loss models for electric machines with different frequency domain and time domain methods for excess loss prediction, IEEE Trans. Magn., 51 (2015), 1--10.

\bibitem{Mizo}
S. Mizohata, Unicit$\acute{\rm e}$ du prolongement des solutions pour quelques op$\acute{\rm e}$rateurs diff$\acute{\rm e}$rentiels paraboliques,
Mem. Coll. Sci. Univ. Kyoto, Ser. A, 31(1958), 219--239.

\bibitem{Ok}
Okubo, Diffusion and Ecological Problems: Mathematical Models. Spinger-
Verlag, 1980.

\bibitem{SS}
J. Saut and B. Scheurer, Unique continuation for some evolution equations, J. Diff. Equations, 66 (1987), 118--139.


\bibitem{ECT}
E. C. Titchmarch, The Theory of Analytic functions, Oxford University Press, London, 1939.

\bibitem{Triki}
F. Triki, Coefficient identification in parabolic equations with final data. Journal de Mathematiqu$\acute{\rm e}$s Pures et Appliqu$\acute{\rm e}$es, 148(2021), 342--359.

\bibitem{ZLL}
M. Zhang, Q. Li and J. Liu, On stability and regularization for data-driven solution of parabolic inverse source problems, 
J. Comput. Phys., 474 (2023), 111769.









\end{thebibliography}
\end{document}